\documentclass[a4paper,11 pt,reqno]{amsart}
\usepackage{a4,amsbsy,amscd,amssymb,amstext,amsmath,mathtools,mathdots,latexsym,oldgerm,enumerate,verbatim,graphicx,xy,ytableau}
\setcounter{tocdepth}{2}              

\makeatletter                                       
\def\l@subsection{\@tocline{2}{0pt}{2.5pc}{2.5pc}{}}
\makeatother                                        

\makeatletter                                                  
\def\chapter{\clearpage\thispagestyle{plain}\global\@topnum\z@ 
\@afterindenttrue \secdef\@chapter\@schapter}
\makeatother






\newtheorem{thmgl} {Theorem}    

\newtheorem{lemgl} {Lemma}


\theoremstyle{definition}





\newtheorem{remgl} {Remark}
\newtheorem{remsgl} [remgl]{Remarks}


\newcommand{\mf}{\mathfrak}
\newcommand{\mc}{\mathcal}

\newcommand{\nts}{\negthinspace}     
\newcommand{\Nts}{\nts\nts}

\newcommand{\ov}{\overline}


\newcommand{\ot}{\otimes}           

\newcommand{\Hom}{{\rm Hom}}        
\newcommand{\Mor}{{\rm Mor}}

\newcommand{\Der}{{\rm Der}}

\newcommand{\ind}{{\rm ind}}


\newcommand{\Ker}{{\rm Ker}}
\renewcommand{\Im}{{\rm Im}}
\newcommand{\codim}{{\rm codim}}

\newcommand{\tr}{{\rm tr}}


\newcommand{\g}{\mf{g}}
\newcommand{\h}{\mf{h}}
\let\ttie\t
\newcommand{\tie}[1]{{\let\t\ttie \ttie#1}}
\renewcommand{\t}{\mf{t}}  

\newcommand{\m}{\mf{m}}
\renewcommand{\u}{\mf{u}}

\renewcommand{\b}{\mf{b}}


\newcommand{\Ad}{{\rm Ad}}              
\newcommand{\Lie}{{\rm Lie}}
\newcommand{\Dist}{{\rm Dist}}
\newcommand{\GL}{{\rm GL}}

\newcommand{\SL}{{\rm SL}}

\newcommand{\SO}{{\rm SO}}

\newcommand{\Sp}{{\rm Sp}} 

\newcommand{\ve}{\varepsilon}

\makeatletter
\def\vcdots{\vbox{\baselineskip4\p@ \lineskiplimit\z@
\kern3\p@\hbox{.}\hbox{.}\hbox{.}\Nts\nts\kern3\p@}}
\makeatother

\xyoption{all}

\begin{document}

\title{On the first restricted cohomology of a reductive Lie algebra and its Borel subalgebras}

\begin{abstract}
Let $k$ be an algebraically closed field of characteristic $p>0$ and let $G$ be a connected reductive group over $k$.
Let $B$ be a Borel subgroup of $G$ and let $\g$ and $\b$ be the Lie algebras of $G$ and $B$.
Denote the first Frobenius kernels of $G$ and $B$ by $G_1$ and $B_1$.
Furthermore, denote the algebras of regular functions on $G$ and $\g$ by $k[G]$ and $k[\g]$, and similar for $B$ and $\b$.
The group $G$ acts on $k[G]$ via the conjugation action and on $k[\g]$ via the adjoint action.
Similarly, $B$ acts on $k[B]$ via the conjugation action and on $k[\b]$ via the adjoint action.
We show that, under certain mild assumptions, the cohomology groups $H^1(G_1,k[\g])$, $H^1(B_1,k[\b])$, $H^1(G_1,k[G])$ and
$H^1(B_1,k[B])$ are zero. We also extend all our results to the cohomology for the higher Frobenius kernels.
\end{abstract}

\author[R.\ Tange]{Rudolf Tange}
\keywords{Cohomology, Frobenius kernel, reductive group}
\thanks{2010 {\it Mathematics Subject Classification}. 20G05, 20G10.}

\maketitle
\markright{\MakeUppercase{First restricted cohomology of a reductive Lie algebra}}

\section*{Introduction}
Let $k$ be an algebraically closed field of characteristic $p>0$, let $G$ be a connected reductive group over $k$,
and let $\g$ be the Lie algebra of $G$. Recall that $\g$ is a restricted Lie algebra: it has a $p$-th power map
$x\mapsto x^{[p]}:\g\to\g$, see \cite[I.3.1]{Bo}. In the case of $G=\GL_n$ this is just the $p$-th matrix power.
A $\g$-module $M$ is called {\it restricted} if $(x^{[p]})_M=(x_M)^p$ for all $x\in M$. Here $x_M$ is the endomorphism
of $M$ representing $x$.

Recall that an element $v$ of a $\g$-module $M$ is called a $\g$-invariant
if $x\cdot v=0$ for all $x\in\g$. We denote the space of $\g$-invariants in $M$ by $M^\g$. The right derived functors
of the left exact functor $M\mapsto M^\g$ from the category of restricted $\g$-modules to the category of vector spaces over $k$
are denoted by $H^i(G_1,-)$.

Let $k[\g]$ be the algebra of polynomial functions on $\g$.
If one is interested in describing the algebra of invariants $(k[\g]/I)^\g$ for some ideal $I$ of $k[\g]$,
then it is of interest to know if $H^1(G_1,k[\g])=0$, because then we have an exact sequence
$$k[\g]^\g\to (k[\g]/I)^\g \to H^1(G_1,I)\to 0$$
by the long exact cohomology sequence.
So, in this case, $(k[\g]/I)^\g$ is built up from the image of $k[\g]^\g$ in $k[\g]/I$, and $H^1(G_1,I)$.

The paper is organised as follows. In Section~\ref{s.prelim} we state some results from the literature
that we need to prove our main result. This includes a description of the algebra of invariants $k[\g]^G$,
the normality of the nilpotent cone $\mc N$, and some lemmas on graded modules over graded rings.
In Section~\ref{s.H^1(G_1,k[g])} we prove Theorems~\ref{thm.H^1(G_1,k[g])} and \ref{thm.H^1(B_1,k[b])} which
state that, under certain mild assumptions on $p$, $H^1(G_1,k[\g])$ and $H^1(B_1,k[\b])$ are zero.
In Section~\ref{s.H^1(G_1,k[G])} we prove Theorems~\ref{thm.H^1(G_1,k[G])} and \ref{thm.H^1(B_1,k[B])}
which state that, under certain mild assumptions on $p$, $H^1(G_1,k[G])$ and $H^1(B_1,k[B])$ are zero.
In Section~\ref{s.higherFrobenius} we extend Theorems~\ref{thm.H^1(G_1,k[g])}-\ref{thm.H^1(B_1,k[B])}
to the cohomology for the higher Frobenius kernels $G_r$ and $B_r$, $r\ge2$.

We briefly indicate some background to our results. For convenience we only discuss the $G$-module $k[\g]$.
As is well-known, under certain mild assumptions $k[\g]$ has a good filtration, see \cite{Don} or \cite[II.4.22]{Jan}.
So a natural first approach to prove that $H^1(G_1,k[\g])=0$ would be that to show that
$H^1(G_1,\nabla(\lambda))=0$ for all induced modules $\nabla(\lambda)$ that show up in a good filtration
of $k[\g]$. However, this isn't true: even for $p>h$, $h$ the Coxeter number, one can easily deduce counterexamples
from \cite[Cor.~5.5]{AJ} (or \cite[II.12.15]{Jan}).\footnote{This approach
{\it does} work when proving the (well-known) result that $H^1(G,k[\g])=0$.}
It is also easy to see that we cannot have
$H^i(G_1,k[\g])=0$ for all $i>0$: the trivial module $k$ is direct summand of $k[\g]$, and for $p>h$
we have $H^{\bullet}(G_1,k)\cong k[\mc N]$ where the degrees of $k[\mc N]$ are doubled, see \cite[II.12.14]{Jan}.

The idea of our proof that $H^1(G_1,k[\g])=0$ is as follows. Noting that $H^1(G_1,k[\g])$ is a $k[\g]^G$-module,
we interpret a certain localisation of $H^1(G_1,k[\g])$ as the cohomology group of the coordinate ring
of the generic fiber of the adjoint quotient map $\g\to\g/\nts/G$.
It is easy to see that this cohomology group has to be zero, so we are left with showing that
$H^1(G_1,k[\g])$ is torsion-free over the invariants $k[\g]^G$. To prove the latter we use Hochschild's characterisation
of the first restricted cohomology group and a ``Nakayama Lemma type result".
The ideas of the proofs of the other main results are completely analogous.

\section{Preliminaries}\label{s.prelim}
Throughout this paper $k$ is an algebraically closed field of characteristic $p>0$.
For the basics of representations of algebraic groups we refer to \cite{Jan}.
\subsection{Restricted representations and restricted cohomology}\label{ss.restrictedcohomology}
Let $G$ be a linear algebraic group over $k$ with Lie algebra $\g$.
Let $G_1$ be the first Frobenius kernel of $G$ (see \cite[Ch.~I.9]{Jan}). It is an infinitesimal group scheme with
$\dim k[G_1]=p^{\dim(\g)}$. Its category of representations is equivalent to the category of 
restricted representations of $\g$, see the introduction.

Let $M$ be an $G_1$-module. By \cite{Hoch} (see also \cite[I.9.19]{Jan}) we have
\begin{equation}\label{eq.restrictedcohomology}
H^1(G_1,M)=\{\text{restricted derivations}:\g\to M\}/\{\text{inner derivations of } M\}.
\end{equation}
Here a {\it derivation} from $\g$ to $M$ is a linear map $D:\g\to M$ satisfying
\begin{equation*}\label{eq.der}
D([x,y])=x\cdot D(y)-y\cdot D(x)
\end{equation*}
for all $x,y\in\g$. Such a derivation is called {\it restricted} if
\begin{equation*}\label{eq.restricted}
D(x^{[p]})=(x_M)^{p-1}(D(x))
\end{equation*}
for all $x\in\g$, 
where $x_M$ is the vector space endomorphism of $M$ given by the action of $x$, and $-^{[p]}$ denotes the $p$-th power map of $\g$.
An {\it inner derivation} of $M$ is a map $x\mapsto x\cdot u:\g\to M$ for some $u\in M$. If $M$ is restricted, then every inner derivation is restricted.
Clearly $H^1(G_1,M)$ is an $G$-module with trivial $\g$-action: If $D$ is a derivation and $y\in\g$, then $[y,D]$ is the inner derivation given by $D(y)$.
Note also that $H^1(G_1,k[\g])$ is a $k[\g]^\g$-module, since the restricted derivations $\g\to k[\g]$ form a $k[\g]^\g$-module and the map $f\mapsto(x\mapsto x\cdot f)$
from $k[\g]$ to the restricted derivations $\g\to k[\g]$ is $k[\g]^\g$-linear.

\subsection{Actions of restricted Lie algebras}\label{ss.restrictedaction}
Let $\g$ be a restricted Lie algebra over $k$. Following \cite{Skry} we define an action of $\g$ on an affine variety $X$ over $k$
to be a homomorphism $\g\to\Der_k(k[X])$ of restricted Lie algebras, where $\Der_k(k[X])$ is the (restricted) Lie algebra of $k$-linear derivations of $k[X]$.
It is easy to see that this includes the case that $X$ is a restricted $\g$-module.
If $\g$ acts on $X$ and $x\in X$, then we define $\g_x$ to be the stabiliser in $\g$ of the maximal ideal $\m_x$ of $k[X]$ corresponding to $x$.
In case $X$ is a closed subvariety of a restricted $\g$-module, then we have $\g_x=\{y\in\g\,|\,y\cdot x=0\}$.
\begin{lemgl}\label{lem.liealginvs}
Let $\g$ be a restricted Lie algebra over $k$ acting on a normal affine variety $X$ over $k$.
If $\max_{x\in X}\codim_\g\g_x=\dim X$, then $k[X]^\g=k[X]^p$.
\end{lemgl}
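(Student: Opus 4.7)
The plan is to prove both inclusions in $k[X]^\g = k[X]^p$. The inclusion $k[X]^p \subseteq k[X]^\g$ is immediate: any $k$-derivation $D$ kills $p$-th powers because $D(f^p) = pf^{p-1}D(f) = 0$ in characteristic $p$. So the substance lies in the reverse inclusion $k[X]^\g \subseteq k[X]^p$, where the codimension hypothesis must enter.

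For this, set $n = \dim X$ and $K = k(X)$. Let $\mc D$ be the image of $\g$ in $\Der_k(k[X])$, and let $E = k[X]\cdot\mc D$ be the $k[X]$-submodule it generates; since $(aD)(f) = a\cdot D(f)$ for $a \in k[X]$, one has $k[X]^\g = k[X]^E$. Identifying $\g_x$ with $\ker(\g\to T_xX)$ by evaluating derivations at $x$, the integer $\codim_\g\g_x$ is the rank at $x$ of the $k[X]$-linear map $\g\otimes_k k[X]\to\Der_k(k[X])$. By the lower semicontinuity of rank for maps of coherent sheaves, the hypothesis $\max_x\codim_\g\g_x = n$ forces this rank to equal $n$ at the generic point, so $E\otimes_{k[X]}K = \Der_k(K,K)$, both being $n$-dimensional $K$-vector spaces.

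Next I would pass to the smooth locus $U$ of $X$, which is open and dense by normality, and show that any $f\in k[X]^\g$ satisfies $df = 0$ on $U$. Given $D'\in\Der_k(k[U])$, the equality $E\otimes K = \Der_k(K,K)$ allows me to clear denominators and find a nonzero $a\in k[X]$ with $aD'\in E$; then $aD'(f) = 0$, and since $k[U]$ is a domain, $D'(f) = 0$. Invoking the classical identification $\ker(d:k[U]\to\Omega^1_{U/k}) = k[U]^p$ on smooth varieties over a perfect field, I obtain $f|_U = g^p$ for some $g\in k[U]$. Finally $g\in K$ satisfies the integral equation $T^p - f = 0$ over $k[X]$, so normality of $X$ forces $g\in k[X]$, whence $f\in k[X]^p$.

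The main obstacle I expect is precisely the classical identification $\ker d = k[U]^p$ on smooth varieties over a perfect field: this is the step that converts the geometric input (full-dimensional generic $\g$-orbits) into the algebraic output (invariants are $p$-th powers). It reduces locally, via étale maps to $\mb A^n$, to the elementary identity $k[x_1,\ldots,x_n]^p = k[x_1^p,\ldots,x_n^p]$ (using $k$ perfect), but deserves either a careful verification or a reference. The remaining ingredients — lower semicontinuity of rank, clearing denominators in $\Der$, and the integral-closure argument — should be routine.
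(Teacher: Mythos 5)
Your argument is correct, but it follows a genuinely different route from the paper's. The paper obtains the field-level identity $k(X)^\g=k(X)^p$ in one stroke by quoting Skryabin's theorem on invariants of finite group schemes, which under the codimension hypothesis gives $[k(X):k(X)^\g]=p^{\dim X}$; comparing with $[k(X):k(X)^p]=p^{\dim X}$ and using the trivial inclusion $k(X)^p\subseteq k(X)^\g$ forces the two subfields to coincide. You instead reprove the generic part of that statement by hand: the codimension hypothesis makes the image of $\g$ generate all of $\Der_k(K)$ over $K=k(X)$, so an invariant function is killed by every rational vector field, hence has vanishing differential on the smooth locus, and the degree-zero Cartier identification $\Ker(d)=\mc O_U^p$ turns it into a $p$-th power there. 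Both proofs then descend to $k[X]$ by the same normality mechanism: a $p$-th root of an element of $k[X]$ lying in $k(X)$ is integral over $k[X]$ and hence belongs to it (the paper phrases this as $k[X]^\g$ being integral over the normal ring $k[X]^p$ with the same fraction field). What your version buys is self-containedness and transparency --- and it nowhere uses the restricted structure of $\g$ or of the action, so it proves slightly more --- at the cost of invoking the Cartier-type fact, which as you note needs a reference or a local computation in \'etale coordinates; the paper's version is shorter given the reference to Skryabin. One point of hygiene: the function $x\mapsto\codim_\g\g_x$ is indeed lower semicontinuous, but the clean way to see this is as the rank of the matrix $\bigl((y_j\cdot f_i)(x)\bigr)$ for a basis $(y_j)$ of $\g$ and algebra generators $(f_i)$ of $k[X]$; fiber ranks of a map into the coherent sheaf $\Der_k(k[X])$, which need not be locally free at singular points, are not a priori the right invariant, so you should phrase the semicontinuity at the level of that matrix (or first note that the locus where $\codim_\g\g_x=\dim X$ is open, hence meets the smooth locus, and work there).
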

\begin{proof}
By \cite[Thm.~5.2(5)]{Skry} we have $[k(X):k(X)^\g]=p^{\dim(X)}$.
By \cite[Cor.~3 to Thm.~V.16.6.4]{Bou0} 
we have $[k(X):k(X)^p]=p^{\dim(X)}$. So $k(X)^\g=k(X)^p$, since we always have $\supseteq$.
Clearly, $k(X)^p={\rm Frac}(k[X]^p)$, $k(X)^\g={\rm Frac}(k[X]^\g)$ and
$k[X]^\g$ is integral over $k[X]^p$. Since $X$ is normal variety, $k[X]^p\cong k[X]$ is a normal ring.
It follows that $k[X]^\g=k[X]^p$.
\end{proof}

\subsection{Two lemmas on graded rings and modules}\label{ss.graded}
We recall a version of the graded Nakayama lemma which follows from \cite[Ch.~13]{Pas}, Lem.~4, Ex.~3, Lem.~3.
\begin{lemgl}[{\cite[Ch.~13]{Pas}}]\label{lem.gradednakayama}
Let $R=\bigoplus_{i\ge0}R^i$ be a positively graded ring with $R^0$ a field, let $M$ be a positively graded $R$-module
and let $(x_i)_{i\in I}$ be a family of homogeneous elements of $M$. Put $R^+=\bigoplus_{i>0}R^i$.
\begin{enumerate}[{\rm(i)}]
\item If the images of the $x_i$ in $M/R^+M$ span the vector space $M/R^+M$ over $R^0$, then the $x_i$ generate $M$.
\item If $M$ is projective and the images of the $x_i$ in $M/R^+M$ form an $R^0$-basis of $M/R^+M$, then $(x_i)_{i\in I}$ is an $R$-basis of $M$.
\end{enumerate}
\end{lemgl}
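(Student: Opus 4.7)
The plan is to prove (i) by a descending-induction-on-degree argument exploiting the positivity of the grading, and then to derive (ii) from (i) via a standard splitting argument for projective modules. I anticipate no real obstacle; the only bookkeeping point is to ensure that the auxiliary modules ($M/N$ in part (i) and $\Ker\pi$ in part (ii)) inherit positive gradings, so that the minimal-degree argument is justified.

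For part (i), let $N$ be the $R$-submodule of $M$ generated by $(x_i)_{i\in I}$; then $M/N$ is a positively graded $R$-module and I want to show it is zero. If not, pick a nonzero homogeneous element $\bar m\in M/N$ of minimal degree $d$ and lift it to $m\in M^d$. The spanning hypothesis gives $m-\sum_i r_i x_i\in R^+M$ for suitable homogeneous $r_i\in R$ with $\deg(r_i x_i)=d$. Writing this element of $R^+M$ as $\sum_j s_j m'_j$ with $s_j\in R^+$ homogeneous of positive degree and $m'_j\in M$ homogeneous, one has $\deg m'_j<d$. Reducing mod $N$ gives $\bar m=\sum_j s_j\overline{m'_j}$, and minimality of $d$ forces each $\overline{m'_j}=0$, contradicting $\bar m\neq 0$.

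For part (ii), form the graded free $R$-module $F=\bigoplus_{i\in I}Re_i$ with $\deg e_i=\deg x_i$, and the graded homomorphism $\pi\colon F\to M$, $e_i\mapsto x_i$, which is surjective by (i). Set $K=\Ker\pi$, a positively graded submodule of $F$. Since $M$ is projective the short exact sequence $0\to K\to F\to M\to 0$ splits, so $F\cong M\oplus K$ as graded $R$-modules, and reducing modulo $R^+$ yields $F/R^+F\cong M/R^+M\oplus K/R^+K$. By hypothesis $(\bar x_i)_{i\in I}$ is an $R^0$-basis of $M/R^+M$, and $(\bar e_i)_{i\in I}$ is tautologically an $R^0$-basis of $F/R^+F$, so the induced map $F/R^+F\to M/R^+M$ is an isomorphism. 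Hence $K/R^+K=0$, and applying (i) to $K$ with the empty family of generators forces $K=0$. Thus $\pi$ is an isomorphism and $(x_i)_{i\in I}$ is an $R$-basis of $M$.
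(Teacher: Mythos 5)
Your proof is correct and is the standard argument; the paper itself gives no proof of this lemma, merely citing Passman, and your minimal-degree induction for (i) together with the split-free-presentation argument for (ii) is exactly the expected route. The only cosmetic point is in (i): the spanning hypothesis gives coefficients in $R^0$, so after taking degree-$d$ components one really has $m-\sum_{\deg x_i=d}c_ix_i\in(R^+M)^d$ with $c_i\in R^0$, which is what your argument uses; and in (ii) note that the (possibly non-graded) splitting $F=K\oplus M'$ still yields $K\cap R^+F=R^+K$, so the reduction to $K/R^+K=0$ is valid.
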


\begin{lemgl}\label{lem.direct_summand}
Let $R$ be a positively graded ring with $R^0$ a field and let $N$ be a positively graded $R$-module which is projective. 
\begin{enumerate}[{\rm(i)}]
\item Let $M$ be a submodule of $N$ with $(R^+N)\cap M\subseteq R^+M$. Then $M$ is free and a direct summand of $N$.
\item Let $M$ be a positively graded $R$-module, let $\varphi:M\to N$ be a graded $R$-linear map and let $\ov \varphi:M/R^+M\to N/R^+N$ be the induced $R^0$-linear map.
Assume the canonical map $M\to M/R^+M$ maps $\Ker(\varphi)$ onto $\Ker(\ov \varphi)$.
Then $\Im(\varphi)$ is free and a direct summand of $N$.
\end{enumerate}
\end{lemgl}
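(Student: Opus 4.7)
The plan is to establish (i) directly via the graded Nakayama lemma and then reduce (ii) to (i) by checking that $\Im(\varphi)$ inherits the hypothesis required by (i).

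For (i), I would pick a family $(x_i)_{i\in I}$ of homogeneous elements of $M$ whose images in $M/R^+M$ form an $R^0$-basis. The hypothesis $(R^+N)\cap M\subseteq R^+M$ is designed precisely so that the natural map $M/R^+M\to N/R^+N$ induced by the inclusion is injective: if $m\in M$ maps to zero in $N/R^+N$, then $m\in(R^+N)\cap M\subseteq R^+M$. Hence the images of the $x_i$ in $N/R^+N$ remain $R^0$-linearly independent, and one can extend them to an $R^0$-basis of $N/R^+N$ by adding homogeneous classes $\ov y_j$, which lift to homogeneous elements $y_j\in N$. Since $N$ is projective and positively graded, Lemma~\ref{lem.gradednakayama}(ii) then yields that $(x_i)\cup(y_j)$ is an $R$-basis of $N$. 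In particular the $x_i$ are $R$-linearly independent and span a free direct summand of $N$; by Lemma~\ref{lem.gradednakayama}(i) applied to $M$ they also generate $M$, so this direct summand equals $M$.

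For (ii), I would apply (i) to the submodule $M':=\Im(\varphi)$ of $N$. The only condition to verify is $(R^+N)\cap M'\subseteq R^+M'$. Given $n=\varphi(m)\in(R^+N)\cap M'$, the class of $m$ in $M/R^+M$ lies in $\Ker(\ov\varphi)$, so by the surjectivity assumption it is represented by some $m_0\in\Ker(\varphi)$; then $m-m_0\in R^+M$ and $n=\varphi(m-m_0)\in\varphi(R^+M)=R^+\Im(\varphi)=R^+M'$, as required.

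The main subtlety I anticipate is the coordination of the two parts of Lemma~\ref{lem.gradednakayama} in (i): one uses the weaker part to ensure that the chosen $x_i$ generate $M$, and the stronger, projectivity-based part applied to $N$ to ensure that they are $R$-linearly independent. These mesh only because the hypothesis $(R^+N)\cap M\subseteq R^+M$ is exactly what is needed to extend the distinguished basis of $M/R^+M$ to a basis of $N/R^+N$. Once (i) is in hand, (ii) reduces to the short diagram chase above.
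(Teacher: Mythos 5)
Your proposal is correct and follows essentially the same route as the paper: for (i), deduce injectivity of $M/R^+M\to N/R^+N$ from the hypothesis, lift a basis of $M/R^+M$ extended to one of $N/R^+N$, and combine the two parts of Lemma~\ref{lem.gradednakayama}; for (ii), the identical reduction to (i) via the surjectivity of $\Ker(\varphi)\to\Ker(\ov\varphi)$. (Your citation of Lemma~\ref{lem.gradednakayama}(i) for the spanning step is the right one; the paper's text cites (ii) there, which is evidently a slip.)
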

\begin{proof}
(i).\ From the assumption it is immediate that the natural map $M/R^+M\to N/R^+N$ is injective. Now choose an $R^0$-basis $(\ov x_i)_{i\in I}$ of $M/R^+M$
and extend it to a basis $(\ov x_i)_{i\in I\cup J}$ of $N/R^+N$. Let $(x_i)_{i\in I\cup J}$ be a homogeneous lift of this basis to $N$. Then this is a basis
of $N$ by Lemma~\ref{lem.gradednakayama}(ii). Furthermore, $(x_i)_{i\in I}$ must span $M$ by Lemma~\ref{lem.gradednakayama}(ii).
So $M$ is (graded-) free and has the $R$-span of $(x_i)_{i\in J}$ as a direct complement.\\
(ii).\ By (i) it suffices to show that $(R^+N)\cap\Im(\varphi)\subseteq R^+\Im(\varphi)$. Let $x\in M$ and assume that $\varphi(x)\in R^+N$.
Then $\ov x:=x+R^+M\in\Ker(\ov \varphi)$. By assumption there exists $x_1\in\Ker(\varphi)$ such that $\ov x=\ov x_1$.
Then $x-x_1\in R^+M$ and $\varphi(x)=\varphi(x-x_1)\in R^+\Im(\varphi)$.
\end{proof}
There is also an obvious version of Lemma~\ref{lem.direct_summand} (and of course of Lemma~\ref{lem.gradednakayama}) for a local ring $R$:
simply assume $R$ local, omit the gradings everywhere and replace $R^+$ by the maximal ideal of $R$.

\subsection{The standard hypotheses and consequences}\label{ss.standardhyp}
In the remainder of this paper $G$ is a connected reductive group over $k$ and $\g$ is its Lie algebra.
Recall that $\g$ is a restricted Lie algebra, see \cite[I.3.1]{Bo}, we denote its $p$-th power map by $x\mapsto x^{[p]}$.
The group $G$ acts on $\g$ and the nilpotent cone $\mc N$ via the adjoint action and on $G$ via conjugation,
and therefore it also acts on their algebras of regular functions: $k[\g]$, $k[\mc N]$ and $k[G]$.
Fix a maximal torus $T$ of $G$ and let $\t$ be its Lie algebra.
We fix an $\mathbb F_p$-structure on $G$ for which $T$ is defined and split over $\mathbb F_p$.
Then $\g$ has an $\mathbb F_p$-structure and $\t$ is $\mathbb F_p$-defined. Denote the $\mathbb F_p$-defined regular functions
on $\g$ and $\t$ by $\mathbb F_p[\g]$ and $\mathbb F_p[\t]$.
We will need the following standard hypotheses, see \cite[6.3, 6.4]{Jan2} or \cite[2.6, 2.9]{Jan3}:
\begin{enumerate}[(H1)]
\item The derived group $DG$ of $G$ is simply connected,
\item $p$ is good for $G$,
\item There exists a $G$-invariant non-degenerate bilinear form on $\g$.
\end{enumerate}
Put $G_x=\{g\in G\,|\,\Ad(g)(x)=x\}$ and $\g_x=\{y\in\g\,|\,[y,x]=0\}$.
Assuming (H1)-(H3) we have by \cite[2.9]{Jan3} that $\Lie(G_x)=\g_x$ for all $x\in\g$. See also \cite[Sect.~2.1]{PS}.
Put $n=\dim(T)$. We call $x\in\g$ {\it regular} if $\dim G_x$ (or $\dim\g_x$) equals $n$, the minimal value.
Under assumptions (H1) and (H3) we have that $d\alpha\ne0$ for all roots $\alpha$,
so restriction of functions defines an isomorphism $k[\g]^G\stackrel{\sim}{\to}k[\h]^W$, see \cite[Prop.~7.12]{Jan3}.
The set of regular semisimple elements in $\g$ is the nonzero locus of the regular function $f_{\rm rs}$ on $\g$
which corresponds under the above isomorphism to the product of the differentials of the roots.
Note that $f_{\rm rs}\in\mathbb F_p[\g]$: $f_{\rm rs}$ is defined over $\mathbb F_p$.

Under assumptions (H1)-(H3) it follows from work of Demazure \cite{Dem} that $k[\t]^W$ is a polynomial algebra in
$n$ homogeneous elements defined over $\mathbb F_p$, see \cite[9.6 end of proof]{Jan2}. 
We denote the corresponding elements of $\mathbb F_p[\g]$ by $s_1,\ldots,s_n$.
Assuming (H1)-(H3) the vanishing ideal of $\mc N$ in $k[\g]$ is generated by the $s_i$, see \cite[7.14]{Jan3}, and
all regular orbit closures are normal, in particular $\mc N$ is normal, see \cite[8.5]{Jan3}.

We call $g\in G$ {\it regular} if $G_g:=\{h\in G\,|\,hgh^{-1}=g\}$ has dimension $n$, the minimal value.
Restriction of functions defines an isomorphism $k[G]^G\stackrel{\sim}{\to}k[T]^W$, see \cite[6.4]{St}.
The set of regular semisimple elements in $G$ is the nonzero locus of the regular function $f'_{\rm rs}$ on $G$
which corresponds under the above isomorphism to $\prod_{\alpha\text{ a root}}(\alpha-1)$. 
If $G$ is semisimple, simply connected, then $k[G]^G$ is a polynomial algebra in the characters $\chi_1,\ldots,\chi_n$
of the irreducible $G$-modules whose highest weights are the fundamental dominant weights. Furthermore, the schematic
fibers of the adjoint quotient $G\to G/\nts/G$ are reduced and normal and they are regular orbit closures.
See \cite{St} and \cite[4.24]{Hum}. One can also deduce from (H1)-(H3) that $\Lie(G_g)=\g_g:=\{x\in\g\,|\,\Ad(g)(x)=x\}$.

\section{The cohomology groups $H^1(G_1,k[\g])$ and $H^1(B_1,k[\b])$}\label{s.H^1(G_1,k[g])}
Throughout this section we assume that hypotheses (H1)-(H3) from Section~\ref{ss.standardhyp} hold.
\begin{thmgl}\label{thm.H^1(G_1,k[g])}
$H^1(G_1,k[\g])=0$.
\end{thmgl}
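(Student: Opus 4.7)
Following the blueprint sketched in the introduction, we split the proof into two parts: (a) $H^1(G_1,k[\g])$ is torsion-free as a module over $k[\g]^G$, and (b) its localisation at the generic point of $\Spec k[\g]^G$ vanishes. Since $k[\g]^G$ is a polynomial algebra by Section~\ref{ss.standardhyp}, hence an integral domain, torsion-freeness gives an injection of $H^1(G_1,k[\g])$ into the corresponding localisation, and so (a) and (b) together force $H^1(G_1,k[\g])=0$.

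\textbf{Step (b): the generic fibre.} Put $K:=\mathrm{Frac}(k[\g]^G)$. Since $G_1$ is a finite group scheme, cohomology commutes with the flat base change to $K$, so the desired localisation is naturally identified with $H^1(G_1,k[\g_K])$, where $\g_K:=\Spec(k[\g]\ot_{k[\g]^G}K)$ is the generic fibre of the adjoint quotient $\g\to\g/\nts/G$. Every point of $\g_K$ is regular semisimple, so the stabiliser $\g_x$ at any $x\in\g_K$ is a Cartan subalgebra of dimension $n$; thus $\codim_\g\g_x=\dim\g-n=\dim\g_K$, and Lemma~\ref{lem.liealginvs} yields $k[\g_K]^\g=k[\g_K]^p$. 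Combined with Hochschild's description~(\ref{eq.restrictedcohomology}), this strong form of $\g$-invariance forces every restricted derivation $\g\to k[\g_K]$ to be inner, whence $H^1(G_1,k[\g_K])=0$.

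\textbf{Step (a): torsion-freeness.} Equip $k[\g]$ with the grading by polynomial degree; then $k[\g]^G$ is positively graded with $k$ in degree zero, and $\Der_{\rm res}(\g,k[\g])$ and $H^1(G_1,k[\g])$ inherit gradings as $k[\g]^G$-modules. By~(\ref{eq.restrictedcohomology}) we have an exact sequence of graded $k[\g]^G$-modules
\[0\to k[\g]^\g\to k[\g]\xrightarrow{\varphi}\Der_{\rm res}(\g,k[\g])\to H^1(G_1,k[\g])\to 0,\]
with $\varphi(u)(x)=x\cd u$. We apply Lemma~\ref{lem.direct_summand}(ii), with $R=k[\g]^G$ and $N=\Der_{\rm res}(\g,k[\g])$, to conclude that $\Im(\varphi)$ is a graded direct summand of $N$; then $H^1(G_1,k[\g])=N/\Im(\varphi)$ is free, and in particular torsion-free, over $k[\g]^G$. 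The hypothesis to check is that the canonical surjection $k[\g]\to k[\g]/k[\g]^G_+k[\g]=k[\mc N]$ (the identification uses that the vanishing ideal of $\mc N$ is generated by $s_1,\ldots,s_n$, Section~\ref{ss.standardhyp}) sends $\Ker(\varphi)=k[\g]^\g$ onto $\Ker(\overline\varphi)=k[\mc N]^\g$. One verifies this by noting that $\mc N$ is normal, that any regular nilpotent $x$ achieves $\codim_\g\g_x=\dim\g-n=\dim\mc N$, and hence by Lemma~\ref{lem.liealginvs} that $k[\mc N]^\g=k[\mc N]^p$; since every $p$-th power in $k[\mc N]$ lifts to a $p$-th power in $k[\g]\subseteq k[\g]^\g$, the required surjectivity holds.

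\textbf{Main obstacle.} The delicate point is the applicability of Lemma~\ref{lem.direct_summand}(ii), which requires $N=\Der_{\rm res}(\g,k[\g])$ to be projective as a graded $k[\g]^G$-module. This should follow from the fact that $k[\g]$ is $k[\g]^G$-free under (H1)--(H3) (Kostant--Chevalley), combined with an analysis of the linear and restrictedness constraints defining $\Der_{\rm res}$. A secondary subtlety is the identification $\Der_{\rm res}(\g,k[\g])\ot_{k[\g]^G}k=\Der_{\rm res}(\g,k[\mc N])$, on which the interpretation of $\Ker(\overline\varphi)$ as $k[\mc N]^\g$ depends.
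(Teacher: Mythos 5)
Your overall two-step strategy (torsion-freeness over $R=k[\g]^G$ plus vanishing after localisation at the generic point) is exactly the paper's, but both steps as you have written them contain genuine gaps. In Step (b), the implication you invoke --- that $k[\g_K]^\g=k[\g_K]^p$ together with Hochschild's description \eqref{eq.restrictedcohomology} forces every restricted derivation to be inner --- is false in general. The nilpotent cone is a counterexample inside this very proof: Lemma~\ref{lem.liealginvs} applies to $\mc N$ just as well (it is normal and contains regular elements), so $k[\mc N]^\g=k[\mc N]^p$, and yet $H^1(G_1,k[\mc N])\ne0$ in examples (see the remark following Theorem~\ref{thm.H^1(B_1,k[b])}). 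What actually makes the generic fibre work is that it is a single closed orbit: since all its points are regular semisimple, Steinberg's results identify $F$ with $G_K/S$ for a maximal torus $S$ of $G_K$, so $K[F]$ is an injective $G_K$-module, hence injective over $(G_K)_1$, and all its higher cohomology vanishes. (You also need to pass to an algebraic closure $K$ of $\mathrm{Frac}(R)$ before applying any of this, since Lemma~\ref{lem.liealginvs} and Steinberg's theorems are stated over an algebraically closed field.)

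In Step (a), you apply Lemma~\ref{lem.direct_summand}(ii) with $N=\Der_{\rm res}(\g,k[\g])$ and correctly flag its projectivity over $R$ as the main obstacle; it is indeed an obstacle, and it is unnecessary. Take instead $N=\Hom_k(\g,k[\g])$, which is manifestly free over $R$ because $k[\g]$ is (Richardson/Kostant under (H1)--(H3)). The lemma then shows that $\Im(\varphi)$ is a direct summand of $\Hom_k(\g,k[\g])$, so the quotient $\Hom_k(\g,k[\g])/\Im(\varphi)$ is torsion-free, and $H^1(G_1,k[\g])=\Der_{\rm res}(\g,k[\g])/\Im(\varphi)$ embeds into this quotient as an $R$-submodule, hence is torsion-free. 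This choice also dispenses with your ``secondary subtlety'' about identifying $\Der_{\rm res}(\g,k[\g])\ot_R k$ with $\Der_{\rm res}(\g,k[\mc N])$: all that is needed is $\Hom_k(\g,k[\g])/R^+\Hom_k(\g,k[\g])=\Hom_k(\g,k[\mc N])$ together with the surjectivity of $k[\g]^\g\to k[\mc N]^\g$, which your $p$-th power argument does establish correctly.
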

\begin{proof}
Let $K$ be an algebraic closure of the field of fractions of $R:=k[\g]^G$.
Since the action of $\g$ on $k[\g]$ is $R$-linear we have $H^1(G_1,k[\g])=H^1\big((G_1)_R,k[\g]\big)$,
where $-_R$ denotes base change from $k$ to $R$, see \cite[I.1.10]{Jan}.
So, by the Universal Coefficient Theorem \cite[Prop.~I.4.18]{Jan}, we have
$$K\ot_RH^1(G_1,k[\g])=H^1\big((G_1)_K,K\ot_Rk[\g]\big)=H^1\big((G_K)_1,K\ot_Rk[\g]\big)\,.$$ 
For $i\in\{1,\ldots,n\}$ denote the regular function on $\g_K$ corresponding to $s_i\in k[\g]$ by $\tilde s_i$.
Then $K\ot_Rk[\g]=K[\g_K]/(\tilde s_1-s_1,\ldots,\tilde s_n-s_n)=K[F]$, where $F\subseteq\g_K$ is the fiber of the morphism
$$x\mapsto(\tilde s_1(x),\ldots,\tilde s_n(x)):\g_K\to\mathbb A_K^n$$ over the point $(s_1,\ldots,s_n)\in\mathbb A_K^n$.
Let $f_{\rm rs}\in\mathbb F_p[\g]\cap k[\g]^G$ be the polynomial function from Section~\ref{ss.standardhyp} with nonzero locus the set of regular semisimple elements in $\g$,
and let $\tilde f_{\rm rs}$ be the corresponding polynomial function on $\g_K$. Then we have for all $x\in F$ that $\tilde f_{\rm rs}(x)=f_{\rm rs}\ne0$.
So $F$ consists of regular semisimple elements. By \cite[Lem.~3.7, Thm.~3.14]{St2} this means that $F=G_K/S$ for some maximal torus $S$ of $G_K$.
In particular, $K[F]$ is an injective $G_K$-module. But then it is also injective as a $(G_K)_1$-module, see \cite[Rem.~I.4.12, Cor.~I.5.13b)]{Jan}.
So $K\ot_RH^1(G_1,k[\g])=H^i\big((G_K)_1,K[F]\big)=0$ for all $i>0$.

So it now suffices to show that $H^1(G_1,k[\g])$ has no $R$-torsion.
We are going to apply Lemma~\ref{lem.direct_summand}(ii) to the $R$-linear map
$$\varphi:f\mapsto(x\mapsto x\cdot f):k[\g]\to\Hom_k(\g,k[\g])\,.$$
Here the grading of $\Hom_k(\g,k[\g])$ is given by $$\Hom_k(\g,k[\g])^i=\Hom_k(\g,k[\g]^i).$$
As explained in \cite[7.13, 7.14]{Jan3} the conditions of \cite[Prop.~10.1]{Rich} are satisfied under the assumptions (H1)-(H3),
so $k[\g]$ is a free $R$-module. So $\Hom_k(\g,k[\g])$ is also a free $R$-module.
We have $k[\g]/R^+k[\g]=k[\mc N]$, and
$$\ov\varphi=f\mapsto(x\mapsto x\cdot f):k[\mc N]\to\Hom_k(\g,k[\mc N]).$$
By \cite[6.3,6.4]{Jan3}, we have $\min_{x\in\mc N}\dim\g_x=n$ and $\dim\mc N=\dim\g-n$.
So from Lemma~\ref{lem.liealginvs} it is clear that the restriction map $k[\g]\to k[\mc N]$ maps 
the $\g$-invariants of $k[\g]$ onto those of $k[\mc N]$. But $k[\g]^\g=\Ker(\varphi)$ and $k[\mc N]^\g=\Ker(\ov\varphi)$.
So, by Lemma~\ref{lem.direct_summand}(ii), $\Im(\varphi)$ is a direct $R$-module summand of $\Hom_k(\g,k[\g])$.
In particular, $\Hom_k(\g,k[\g])/\Im(\varphi)$ is isomorphic to an $R$-submodule of $\Hom_k(\g,k[\g])$ and therefore $R$-torsion-free.
From \eqref{eq.restrictedcohomology} in Section~\ref{ss.restrictedcohomology} it is clear that $H^1(G_1,k[\g])$
is isomorphic to an $R$-submodule of $\Hom_k(\g,k[\g])/\Im(\varphi)$, so it is also $R$-torsion-free.
\end{proof}

Let $B$ be a Borel subgroup of $G$ containing $T$, let $\b$ be its Lie algebra and let $\u$ be the Lie algebra of the unipotent radical $U$ of $B$.
\begin{thmgl}\label{thm.H^1(B_1,k[b])}
$H^1(B_1,k[\b])=0$.
\end{thmgl}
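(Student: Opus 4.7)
The plan is to mirror the proof of Theorem \ref{thm.H^1(G_1,k[g])}, with the projection $\pi:\b\to\t$ playing the role of the adjoint quotient $\g\to\g/\nts/G$ and a Lyndon--Hochschild--Serre argument replacing the direct injectivity step. Set $R:=k[\b]^B$. Since $\pi$ is $B$-equivariant for the trivial $B$-action on $\t$, pullback along $\pi$ gives an inclusion $k[\t]\hookrightarrow R$; and since for $t\in\t$ regular semisimple the $B$-orbit $B\cd t=t+\u$ already fills the fibre $\pi^{-1}(t)$, this inclusion is an equality. Hence $R$ is a polynomial algebra in $n$ homogeneous generators and $k[\b]=R\ot k[\u]$ is a free graded $R$-module. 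As in the proof of Theorem \ref{thm.H^1(G_1,k[g])}, it then suffices to establish (a) $H^1((B_K)_1,K\ot_R k[\b])=0$, where $K$ is an algebraic closure of ${\rm Frac}(R)$, and (b) $H^1(B_1,k[\b])$ is torsion-free as an $R$-module.

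For (a), the same computation identifies $K\ot_R k[\b]$ with $K[F]$, where $F\subseteq\b_K$ is the fibre of $\pi_K$ over the image $\tau\in\t_K$ of the generic point of $\t$. Since $\tau$ is regular semisimple, $\ad(\tau)\colon\u_K\to\u_K$ is invertible and $F=\tau+\u_K=B_K\cd\tau$ with stabiliser $C_{B_K}(\tau)=T_K$, so $F\cong B_K/T_K\cong U_K$ as a $U_K$-variety (via the semidirect decomposition $B=U\rtimes T$). Therefore $K[F]\cong K[U_K]$ is the regular $U_K$-representation, hence injective as a $(U_K)_1$-module. The Lyndon--Hochschild--Serre spectral sequence for $B_1=U_1\rtimes T_1$ then collapses to
$$H^i((B_K)_1,K[F])=H^i((T_K)_1,K[F]^{(U_K)_1}),$$
and the right-hand side vanishes for $i>0$ because $T_1$ is diagonalisable.

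For (b), I apply Lemma \ref{lem.direct_summand}(ii) to the $R$-linear graded map $\varphi\colon f\mapsto(x\mapsto x\cd f):k[\b]\to\Hom_k(\b,k[\b])$, whose kernel is $k[\b]^\b$. Since $V(R^+)$ is precisely the zero fibre of $\pi$, namely $\u$, one has $k[\b]/R^+k[\b]=k[\u]$ and $\Ker(\ov\varphi)=k[\u]^\b$. The hypothesis of Lemma \ref{lem.direct_summand}(ii) then reduces to showing that restriction $k[\b]\to k[\u]$ sends $k[\b]^\b$ onto $k[\u]^\b$, and this follows from Lemma \ref{lem.liealginvs} applied to $\b$ acting on the (normal, affine) variety $\u$: a principal nilpotent $e\in\u$ has $\g_e\subseteq\b$ of dimension $n$, giving $\max_{x\in\u}\codim_\b\b_x=\dim\u$ and hence $k[\u]^\b=k[\u]^p$; restriction already sends $k[\b]^p\subseteq k[\b]^\b$ surjectively onto $k[\u]^p$, so the required lift exists. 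The remainder proceeds verbatim as in Theorem \ref{thm.H^1(G_1,k[g])}: $\Im(\varphi)$ is an $R$-direct summand of the free module $\Hom_k(\b,k[\b])$, and $H^1(B_1,k[\b])$ embeds by \eqref{eq.restrictedcohomology} in the $R$-torsion-free quotient.

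The main genuinely new step compared to Theorem \ref{thm.H^1(G_1,k[g])} is the Lyndon--Hochschild--Serre manoeuvre in (a): $K[G/S]$ was injective as a $G$-module, but $K[B/T]$ is not injective as a $B$-module, and the semidirect decomposition of $B_1$ together with the vanishing of $H^i(T_1,-)$ in positive degrees has to be used instead. The other new checks ($R\cong k[\t]$, freeness of $k[\b]$ over $R$, and the centraliser-dimension computation in $\u$) are routine.
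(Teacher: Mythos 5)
Your proposal is correct and follows the paper's proof almost exactly: the paper establishes $k[\b]^B\cong k[\t]$ (with essentially your density argument for the regular semisimple locus), notes that $k[\b]$ is free over $R=k[\b]^B$ with $k[\b]/R^+k[\b]=k[\u]$ and $\min_{x\in\u}\dim\b_x=n$, and then runs the proof of Theorem~\ref{thm.H^1(G_1,k[g])} verbatim with $(B,\b,\u,k[\b]^B,\xi_i)$ in place of $(G,\g,\mc N,k[\g]^G,s_i)$. Your torsion-freeness step (b) is exactly the paper's argument.

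The one place you diverge is step (a), and there your motivation rests on a false premise: you assert that $K[B_K/T_K]$ is \emph{not} injective as a $B_K$-module. In fact it is: $K[B_K/T_K]=\ind_{T_K}^{B_K}K$, the functor $\ind_{T_K}^{B_K}$ is right adjoint to the exact restriction functor and hence preserves injectives, and every $T_K$-module is injective since $T_K$ is diagonalisable (equivalently, $\ind_{T_K}^{B_K}K$ contains the injective hull of every one-dimensional $B_K$-module as a summand). So the paper's argument for the $G$-case --- $K[F]$ is injective over the ambient group, hence over its first Frobenius kernel by \cite[Rem.~I.4.12, Cor.~I.5.13b)]{Jan} --- transfers without change, and no Lyndon--Hochschild--Serre manoeuvre is needed. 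That said, your detour is itself valid: $F\cong B_K/T_K\cong U_K$ as a $U_K$-variety does make $K[F]$ injective as a $(U_K)_1$-module, and the spectral sequence for $U_1\trianglelefteq B_1$ together with the vanishing of higher $T_1$-cohomology does give the vanishing. So the proof goes through either way; only the claimed necessity of the extra step is wrong.
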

\begin{proof}
Consider the restriction map $k[\b]^B\to k[\t]$. Under the assumptions (H1)-(H3) $\t$ contains elements which are regular in $\g$.
Furthermore, the set of regular semisimple elements in $\g$ is open in $\g$. So the regular semisimple elements of $\g$
in $\b$ are dense in $\b$. Since the union of the $B$-conjugates of $\t$ is the set of all semisimple elements in $\b$, by \cite[Prop.~11.8]{Bo},
it is also dense in $\b$. This shows that the map $k[\b]^B\to k[\t]$ is injective.
Furthermore, $\Ad(g)(x)-x\in\u$ for all $g\in B$ and $x\in\b$ by \cite[Prop.~3.17]{Bo}, since $DB\subseteq U$. So if we extend $f\in k[\t]$
to a regular function $f$ on $\b$ by $f(x+y)=f(x)$ for all $x\in\t$ and $y\in\u$, then $f\in k[\b]^B$. So the map $k[\b]^B\to k[\t]$ is surjective,
that is, restriction of functions defines an isomorphism $$k[\b]^B\stackrel{\sim}{\to}k[\t]\,.$$
Extend a basis of $\t^*$ to (linear) functions $\xi_1,\ldots,\xi_n$ on $\b$ in the manner indicated above.
Then these functions are algebraically independent generators of $k[\b]^B$, and $k[\b]$ is a free $k[\b]^B$-module.
Clearly, the vanishing ideal of $\u$ in $k[\b]$ is generated by the $\xi_i$.
Furthermore, $\min_{x\in\u}\dim\b_x=n$, see \cite[6.8]{Jan3}. 
We can now follow the same arguments as in the proof of Theorem~\ref{thm.H^1(G_1,k[g])}.
Just replace $G$, $\g$, $\mc N$, $k[\g]^G$ and the $s_i$ by $B$, $\b$, $\u$, $k[\b]^B$ and the $\xi_i$,
and replace $f_{\rm rs}$ by its restriction to $\b$.
\end{proof}

\begin{remgl}
We have $k[\mc N]=\ind_B^Gk[\u]$. Using \cite[Lemma~II.12.12a)]{Jan} and the arguments from \cite[II.12.2]{Jan}
it follows that $H^1(G_1,k[\mc N])=\ind_B^GH^1(B_1,k[\u])$. From this one can easily deduce examples with
$H^1(G_1,k[\mc N])\ne0$.
\end{remgl}

\section{The cohomology groups $H^1(G_1,k[G])$ and $H^1(B_1,k[B])$}\label{s.H^1(G_1,k[G])}
Assume first that $G=\GL_n$. Put $R=k[\g]^G$ and $R_1=R[\det^{-1}]$. Then, using the fact that the $\g$-action on $k[G]$ is $R_1$-linear,
the Universal Coefficient Theorem and Theorem~\ref{thm.H^1(G_1,k[g])}, we obtain
\begin{align*}
H^1(G_1,k[G])=H^1\big((G_1)_{R_1},k[G]\big)=&R_1\ot_RH^1\big((G_1)_R,k[\g]\big)=\\
&R_1\ot_RH^1(G_1,k[\g])=0\,.
\end{align*}
Similarly, we obtain $H^1(B_1,k[B])=0$.

To prove our result for the case of arbitrary reductive $G$ we assume in this section the following:

{\it There exists a central (see \cite[22.3]{Bo}) surjective morphism 
$\varphi:\tilde G\to G$ where $\tilde G$ is a direct product of groups of the following types:
\begin{enumerate}[{\rm (1)}]
\item a simply connected simple algebraic group of type $\ne A$ for which $p$ is good,
\item $\SL_m$ for $p\nmid m$,
\item $\GL_m$,
\item a torus.
\end{enumerate}}

\begin{thmgl}\label{thm.H^1(G_1,k[G])}
$H^1(G_1,k[G])=0$.
\end{thmgl}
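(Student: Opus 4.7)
The plan is to mirror the proof of Theorem~\ref{thm.H^1(G_1,k[g])} almost verbatim, with $(\g,\mc N,f_{\rm rs},s_1,\ldots,s_n)$ replaced by $(G,\mc U,f'_{\rm rs},\chi_1,\ldots,\chi_n)$, where $\mc U$ is the unipotent variety of $G$. As a preliminary step I would reduce to the case $G=\tilde G$ (using that the kernel of the central isogeny $\varphi$ acts trivially by conjugation and is multiplicative in the cases at hand, so the relevant LHS spectral sequence collapses) and then split $H^1(\tilde G_1,k[\tilde G])$ across the direct factors of $\tilde G$ by K\"unneth. A torus factor $T$ is trivial since $T_1$ is multiplicative, hence linearly reductive; the $\GL_m$ case is already handled in the paragraph preceding the statement. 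This leaves the case that $G$ is simply connected simple of type (1) or (2), for which $k[G]^G=k[\chi_1,\ldots,\chi_n]$ is a polynomial ring and $k[G]$ is free over it, by Section~\ref{ss.standardhyp}.

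Set $R=k[G]^G$ and let $K$ be an algebraic closure of ${\rm Frac}(R)$. Since $\g$ acts $R$-linearly on $k[G]$, the Universal Coefficient Theorem gives
$$K\otimes_R H^1(G_1,k[G])=H^1\bigl((G_K)_1,K[F]\bigr),$$
where $F\subset G_K$ is the generic fiber of the adjoint quotient $G_K\to G_K/\nts/G_K$, cut out by $\tilde\chi_i-\chi_i$. Since $f'_{\rm rs}$ is a unit on $F$, every point of $F$ is regular semisimple, so by Steinberg (see Section~\ref{ss.standardhyp}) $F$ is a single conjugacy class isomorphic to $G_K/S$ for some maximal torus $S$ of $G_K$. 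Then $K[F]$ is injective as a $G_K$-module and hence as a $(G_K)_1$-module, so $K\otimes_R H^1(G_1,k[G])=0$.

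It now suffices to show that $H^1(G_1,k[G])_\m$ is $R_\m$-torsion-free for every maximal ideal $\m\subset R$, since combined with vanishing over ${\rm Frac}(R)$ this forces each localization, and hence $H^1(G_1,k[G])$ itself, to vanish. For this I would apply the local version of Lemma~\ref{lem.direct_summand}(ii) to the $R$-linear map
$$\varphi:f\mapsto(x\mapsto x\cdot f):k[G]\to\Hom_k(\g,k[G]).$$
Freeness of $k[G]$ over $R$ (and hence of the target) is Steinberg's theorem recalled in Section~\ref{ss.standardhyp}. The schematic fiber $G_\m$ is a normal regular orbit closure of dimension $\dim G-n$ with $\min_{g\in G_\m}\dim\g_g=n$, so Lemma~\ref{lem.liealginvs} gives $k[G_\m]^\g=k[G_\m]^p$; combined with the fact that $p$-th powers in $k[G]$ surject onto those in $k[G_\m]$, this yields surjectivity of $\Ker(\varphi)=k[G]^\g\to k[G_\m]^\g=\Ker(\ov\varphi)$. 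The lemma then makes $\Im(\varphi)_\m$ an $R_\m$-direct summand of $\Hom_k(\g,k[G])_\m$, and by \eqref{eq.restrictedcohomology} $H^1(G_1,k[G])_\m$ embeds into the torsion-free complement. The one point that goes beyond a literal transcription of the earlier proof is the verification of Steinberg's structural conclusions at \emph{every} fiber $G_\m$ rather than only at the identity fiber; that is the main technical obstacle, and once it is in hand the rest of the argument is routine.
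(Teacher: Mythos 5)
Your proposal is correct and follows essentially the same route as the paper: reduce to $\tilde G$ and its simple factors, get vanishing after base change to the algebraic closure of ${\rm Frac}(k[G]^G)$ via Steinberg's description of regular semisimple classes, and prove torsion-freeness by applying the local version of Lemma~\ref{lem.direct_summand}(ii) at every maximal ideal of $k[G]^G$, using that every fibre of the adjoint quotient is a normal regular orbit closure. The only cosmetic difference is that the paper phrases the reduction to $\tilde G$ via the image $N$ of $\tilde G_1$ in $G_1$ together with the fact that $k[G]=k[\tilde G]^{\tilde Z}$ is a direct $\tilde G$-module summand of $k[\tilde G]$, rather than as a collapsing spectral sequence.
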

\begin{proof}
First we reduce to the case that $G$ is of one of the above four types. Let $\varphi:\tilde G\to G$ be as above.
Then $G$ is the quotient of $\tilde G$ by a (schematic) central diagonalisable closed subgroup scheme $\tilde Z$, see \cite[II.1.18]{Jan}.
Let $N$ be the image of $\tilde G_1$ in $G_1$. Then $N$ is normal in $G_1$ and $G_1/N$ is diagonalisable. 
So $H^i(G_1,k[G])=H^i(N,k[G])^{G_1/N}$, by \cite[I.6.9(3)]{Jan}.
Furthermore, $H^i(N,k[G])=H^i(\tilde G_1,k[G])$, by \cite[I.6.8(3)]{Jan}, since the kernel of $\tilde G_1\to N$ is central.

The group scheme $\tilde Z$ also acts via the right multiplication action on $k[\tilde G]$ and this action commutes with the conjugation action of $\tilde G$.
So $k[G]=k[\tilde G]^{\tilde Z}$ is a direct $\tilde G$-module summand of $k[\tilde G]$.
So it suffices to show that $H^1(\tilde G_1,k[\tilde G])=0$. By the K\"unneth Theorem we may now assume that $G$ is of one of the above four types.

For $G$ a torus the assertion is obvious, and for $G=\GL_n$ we have already proved the assertion. Now assume that $G$ is of type (1) or (2).
Then $G$ satisfies (H1)-(H3) and $G$ is simply connected simple.
By \cite[2.15]{St2} the centraliser of a semisimple group element is connected, so when the element is also regular, its
centraliser is a maximal torus. As in the proof of Theorem~\ref{thm.H^1(G_1,k[g])} we are now reduced
to showing that $H^1(G_1,k[G])$ has no torsion over $R:=k[G]^G$.

For this it is enough that $R_{\mf m}\ot_RH^1(G_1,k[G])=H^1(G_1,R_{\mf m}\ot_Rk[G])$ has no torsion over $R_{\mf m}$ for all maximal ideals $\mf m$ of $R$.
By \cite[6.11, 7.16, 8.1]{St} the conditions of \cite[Prop.~10.1]{Rich} are satisfied, so $k[G]$ is a free $R$-module
and $k[G]_\m=R_{\mf m}\ot_Rk[G]$ is a free $R_{\mf m}$-module for all maximal ideals $\mf m$ of $R$.
Furthermore, $k[G]_\m/{\mf m}_{\mf m}k[G]_\m=k[G]/{\mf m}k[G]$ is the coordinate ring of a fiber $F$ of the adjoint quotient map.
We know $F$ is normal of codimension $n$, and a regular orbit closure, so $k[F]^\g=k[F]^p$ by Lemma~\ref{lem.liealginvs}.
By the local version of Lemma~\ref{lem.direct_summand}
the $R_\m$-linear map $\varphi:f\mapsto(x\mapsto x\cdot f):k[G]_\m\to\Hom_k(\g,k[G]_\m)$
we now get that $H^1(G_1,R_{\mf m}\ot_Rk[G])$ has no $R_{\mf m}$-torsion.
\end{proof}

Let $B$ be a Borel subgroup of $G$.
\begin{thmgl}\label{thm.H^1(B_1,k[B])}
$H^1(B_1,k[B])=0$.
\end{thmgl}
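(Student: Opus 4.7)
My plan is to follow the pattern of the proofs of Theorems \ref{thm.H^1(B_1,k[b])} and \ref{thm.H^1(G_1,k[G])}, interpreting a generic localisation as the coordinate ring of a $B$-orbit through a regular semisimple element and then reducing $R$-torsion-freeness to a local application of Lemma \ref{lem.direct_summand}(ii). First I would reduce to the case where $G$ is of one of the four types in the section's standing hypothesis, exactly as in Theorem \ref{thm.H^1(G_1,k[G])}: if $\varphi:\tilde G\to G$ is the given central surjection with diagonalisable kernel $\tilde Z$, then $\tilde B:=\varphi^{-1}(B)$ is a Borel of $\tilde G$, the right translation action of $\tilde Z$ on $k[\tilde B]$ commutes with conjugation, and $k[B]=k[\tilde B]^{\tilde Z}$ is a direct $\tilde B$-module summand of $k[\tilde B]$; the K\"unneth theorem then reduces to a single factor. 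For $G$ a torus, $B=T$ and $k[T]$ is injective over $T_1$, so the claim is immediate. For $G=\GL_n$, $B$ is the principal open subvariety $\{\det\ne0\}$ of $\b$, with $\det$ the product of the diagonal entries, a $B$-invariant on $\b$; thus $k[B]=k[\b][\det^{-1}]$ and the Universal Coefficient Theorem together with Theorem \ref{thm.H^1(B_1,k[b])} give $H^1(B_1,k[B])=k[\b]^B[\det^{-1}]\ot_{k[\b]^B}H^1(B_1,k[\b])=0$.

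For $G$ simply connected simple with $p$ good (types (1) and (2)) I would mimic Theorem \ref{thm.H^1(G_1,k[G])}. Since $B/U=T$ is abelian, the projection $\pi:B\to T$ is $B$-invariant under conjugation, so $f\mapsto f\circ\pi:k[T]\to k[B]^B$ is a one-sided inverse to restriction; combined with the density of semisimple elements in $B$ this gives an isomorphism $R:=k[B]^B\cong k[T]$. Since $B\cong T\times U$ as a variety, $k[B]$ is a free $R$-module. Letting $K$ be the algebraic closure of ${\rm Frac}(R)$, the generic fibre $F\subseteq B_K$ is the coset $tU_K$ for the generic $K$-point $t$ of $T$; since $t$ is regular semisimple, the Chevalley commutator relations show that every element $tu$ of $tU_K$ is $B_K$-conjugate to $t$ (and hence semisimple), so $F=B_K\cd t\cong B_K/T_K$. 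Hence $K[F]=\ind_{T_K}^{B_K}K$ is injective as $B_K$-module (induction of the injective $T_K$-module $K$), so injective as $(B_K)_1$-module, giving $K\ot_R H^1(B_1,k[B])=0$.

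It then remains to show that $H^1(B_1,k[B])$ is $R$-torsion-free, which I would do by localising at each $\m\in\Maxspec R$ corresponding to $t\in T$ and applying the local version of Lemma \ref{lem.direct_summand}(ii) to the $R_\m$-linear map $f\mapsto(x\mapsto x\cd f):k[B]_\m\to\Hom_k(\b,k[B]_\m)$. The module $k[B]_\m$ is $R_\m$-free and $k[B]_\m/\m_\m k[B]_\m=k[tU]$ is smooth of dimension $\dim U$, hence normal; provided $tU$ contains an element $x$ with $\dim\b_x=n$, Lemma \ref{lem.liealginvs} gives $k[tU]^\b=k[tU]^p$, and the argument concludes exactly as in Theorem \ref{thm.H^1(G_1,k[G])}.

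The main obstacle is verifying that every coset $tU$ contains an element regular in $B$ (with $\dim\b_x=n$). This is clear for $t$ regular semisimple (take $x=t$) or for $t=e$ (take a regular unipotent in $U$), but in general one needs a short separate argument, e.g.\ by taking $x=tu$ with $u$ a regular unipotent of the reductive centraliser $C_G(t)^\circ$, or by invoking a Steinberg-type cross-section for $B$.
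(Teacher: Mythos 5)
Your proposal is correct and follows essentially the same route as the paper, whose proof of this theorem is just the instruction to modify the proof of Theorem~\ref{thm.H^1(G_1,k[G])} in the way Theorem~\ref{thm.H^1(B_1,k[b])} modifies Theorem~\ref{thm.H^1(G_1,k[g])}; you have carried out exactly those modifications (reduction to the four types, $k[B]^B\cong k[T]$, freeness of $k[B]$ over $k[T]$, the generic fibre $tU_K=B_K/T_K$, and local torsion-freeness via Lemma~\ref{lem.direct_summand}). The one point you flag --- that each fibre $tU$ contains an element with $\dim\b_x=n$ --- is indeed the only substantive verification left implicit by the paper, and your suggested fix (take $x=tu$ with $u$ regular unipotent in $C_G(t)^\circ\cap U$, so $\dim\g_x=n$ and hence $\dim\b_x=n$ since $\b_x\subseteq\g_x$ while $(\Ad(x)-1)\b\subseteq\u$ forces $\dim\b_x\ge n$) works.
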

\begin{proof}
This follows by modifying the proof of Theorem~\ref{thm.H^1(G_1,k[G])} in the same way as the proof of Theorem~\ref{thm.H^1(G_1,k[g])}
was modified to obtain the proof of Theorem~\ref{thm.H^1(B_1,k[b])}.
\end{proof}

\begin{remsgl}
1.\ One can also prove Theorem~\ref{thm.H^1(B_1,k[B])} assuming (H1)-(H3). The point is that it is obvious that restriction of functions
always defines an isomorphism $k[B]^B\stackrel{\sim}{\to} k[T]$.\\ 
2.\ We briefly discuss the $B$-cohomology of $k[B]$ and $k[\b]$. From \cite[Thm 1.13, Thm 1.7(a)(ii)]{vdK} it is immediate that $H^i(B,k[B])=0$
for all $i>0$. 
Now assume that there exists a central surjective morphism $\varphi:\tilde G\to G$ where $\tilde G$ is a direct product of groups of the types (1)-(4) mentioned before,
except that for type (2) we drop the condition on $p$. Then we deduce from the arguments from the proof of \cite[Prop.~4.4]{AJ} that $H^i(B,k[\b])=0$ for all $i>0$ as follows.
First we reduce as in the proof of Theorem~\ref{thm.H^1(G_1,k[G])} to the case that $G$ is simple of type (1) or (2) and then we deal with type (2) as in \cite{AJ}.
Now assume $G$ is of type (1) and let $I$ be the vanishing ideal of $B$ in $k[G]$. As in \cite{AJ} write
\begin{equation}\label{eq.m}
\m=M\oplus\m^2
\end{equation}
where
$\m$ is the vanishing ideal in $k[G]$ of the unit element of $G$ and $M\cong\g^*$ as $G$-modules. It suffices to show that $I=I\cap M+I\cap\m^2$,
since then we get a decomposition analogous to \eqref{eq.m} for $k[B]$ and we can finish as in \cite{AJ}. Let $f\in I$. Then the $M$-component of $f$ correspond to $df\in\g^*$
which vanishes on $\b$. This means it corresponds under the trace form of the chosen representation $\rho:G\to V$ (the adjoint representation for exceptional types)
to an element $x\in\u$. So the $M$-component of $f$ is $g\mapsto\tr\big(\rho(g)d\rho(x)\big)$ which vanishes on $B$. But then the $\m^2$-component of $f$ must also vanish on $B$.
\end{remsgl}

\section{The cohomology groups for the higher Frobenius kernels}\label{s.higherFrobenius}
In this section we will generalise the results from the previous two sections to all Frobenius kernels $G_r$, $r\ge 1$.

\begin{lemgl}\label{lem.liealginvs2}
Let $G$ be a linear algebraic group over $k$ acting on a normal affine variety $X$ over $k$.
If $\max_{x\in X}\codim_\g\g_x=\dim X$, then $k[X]^{G_r}=k[X]^{p^r}$ for all integers $r\ge1$.
\end{lemgl}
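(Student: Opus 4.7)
The plan is to induct on $r$, with base case $r=1$ being Lemma~\ref{lem.liealginvs}. The inclusion $k[X]^{p^r}\subseteq k[X]^{G_r}$ is automatic (the $p^r$-th power of any function is $G_r$-invariant, since the Frobenius morphism $F^r:X\to X^{(r)}$ is $G_r$-invariant in the appropriate sense), so the work lies in the reverse inclusion.

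For the inductive step, I would exploit the short exact sequence of infinitesimal group schemes
$$1\to G_1\to G_r\to G_r/G_1\to 1,$$
which yields $k[X]^{G_r}=(k[X]^{G_1})^{G_r/G_1}$ since $G_1$ is normal in $G_r$. By the $r=1$ case (i.e.\ Lemma~\ref{lem.liealginvs}), $k[X]^{G_1}=k[X]^p$, and the Frobenius $F:X\to X^{(1)}$ identifies $k[X]^p$ with $k[X^{(1)}]$ as $k$-algebras. On the group side, there is the standard Frobenius isomorphism $G_r/G_1\stackrel{\sim}{\to}(G^{(1)})_{r-1}$, and under these two identifications the residual $G_r/G_1$-action on $k[X]^p$ matches the natural action of $(G^{(1)})_{r-1}$ on $k[X^{(1)}]$.

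Since $k$ is algebraically closed, hence perfect, $X^{(1)}$ is a normal affine variety of the same dimension as $X$ on which $G^{(1)}$ acts, and the stabilizer hypothesis $\max_{y\in X^{(1)}}\codim_{\g^{(1)}}(\g^{(1)})_y=\dim X^{(1)}$ is inherited from $X$ because the Frobenius is a homeomorphism on points and is compatible with the differential. The inductive hypothesis, applied to the pair $(G^{(1)},X^{(1)})$ and the integer $r-1$, then gives
$$k[X^{(1)}]^{(G^{(1)})_{r-1}}=k[X^{(1)}]^{p^{r-1}}.$$
Pulling back via the Frobenius turns the right-hand side into $(k[X]^p)^{p^{r-1}}=k[X]^{p^r}$, which closes the induction.

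The main obstacle is the bookkeeping of the Frobenius twist: one must verify carefully that the action of $G_r/G_1$ on $k[X]^{G_1}=k[X]^p$, pulled through the isomorphisms $G_r/G_1\cong (G^{(1)})_{r-1}$ and $k[X]^p\cong k[X^{(1)}]$, really is the natural action of $(G^{(1)})_{r-1}$ on $k[X^{(1)}]$, so that the inductive hypothesis applies. Granting this identification (which is standard for Frobenius kernels and essentially amounts to commutativity of the Frobenius with the action morphism), the rest of the argument is formal.
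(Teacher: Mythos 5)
Your argument is correct, but it takes a genuinely different route from the paper. The paper proves the statement for each $r$ directly: it first observes that $\codim_\g\g_x\le\codim_GG_x\le\dim X$ forces the schematic centraliser of a generic $x$ to be smooth, so that $(G_r)_x=(G_x)_r$ has index $p^{r\dim X}$ in $G_r$; Skryabin's theorem for finite group schemes (\cite[Thm.~2.1(5)]{Skry}, the general-height analogue of the result used in Lemma~\ref{lem.liealginvs}) then gives $[k(X):k(X)^{G_r}]=p^{r\dim X}$, which is compared with $[k(X):k(X)^{p^r}]=p^{r\dim X}$ and combined with normality exactly as in the height-one case. Your induction instead reduces everything to the height-one statement, i.e.\ to Lemma~\ref{lem.liealginvs} itself, via $k[X]^{G_r}=(k[X]^{G_1})^{G_r/G_1}$, the isomorphism $G_r/G_1\cong(G^{(1)})_{r-1}$, and the equivariant identification $k[X]^p\cong k[X^{(1)}]$. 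This buys you independence from the finite-group-scheme version of Skryabin's theorem and, notably, you never need the smoothness of the generic stabiliser, since the hypothesis you propagate is the Lie-algebra condition on the Frobenius twist (which is inherited verbatim because twisting by the Frobenius of the perfect field $k$ preserves dimensions and stabilisers). The price is the twist bookkeeping you flag, which is indeed standard; the only imprecision is your phrase about the Frobenius being ``compatible with the differential'' --- the differential of Frobenius is zero, and the correct statement is that the whole action morphism twists, so that the $\g^{(1)}$-action on $X^{(1)}$ is the twist of the $\g$-action on $X$ and its stabilisers are the twists of the $\g_x$. With that reading, your proof is complete and arguably more economical in its inputs than the paper's.
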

\begin{proof}
Since $\codim_\g\g_x\le\codim_GG_x\le\dim(X)$ and $\max_{x\in X}\codim_\g\g_x=\dim X$ we must have
that for $x\in X$ with $\codim_\g\g_x=\dim X$ the schematic centraliser of $x$ in $G$ is reduced.
So $(G_r)_x=(G_x)_r$ and $$(G_r:(G_r)_x):=\dim(k[G_r])/\dim(k[(G_r)_x])=p^{r\dim(G)}/p^{r\dim(G_x)}=p^{r\dim(X)}.$$
By \cite[Thm.~2.1(5)]{Skry} we get $[k(X):k(X)^\g]=p^{r\dim(X)}$.
By \cite[Cor.~3 to Thm.~V.16.6.4]{Bou0} and the tower law 
we have $[k(X):k(X)^{p^r}]=p^{r\dim(X)}$. So $k(X)^\g=k(X)^{p^r}$, since we always have $\supseteq$.
Clearly, $k(X)^{p^r}={\rm Frac}(k[X]^{p^r})$, $k(X)^\g={\rm Frac}(k[X]^\g)$ and
$k[X]^\g$ is integral over $k[X]^{p^r}$. Since $X$ is normal variety, $k[X]^{p^r}\cong k[X]$ is a normal ring.
It follows that $k[X]^\g=k[X]^{p^r}$.
\end{proof}

\begin{thmgl}\label{thm.H^1(G_r,k[g])}
Let $r$ be an integer $\ge1$.
\begin{enumerate}[{\rm(i)}]
\item Under the assumptions of Section~\ref{s.H^1(G_1,k[g])} we have
$$H^1(G_r,k[\g])=0\text{ and }H^1(B_r,k[\b])=0.$$
\item Under the assumptions of Section~\ref{s.H^1(G_1,k[G])} we have
$$H^1(G_r,k[G])=0\text{ and }H^1(B_r,k[B])=0.$$
\end{enumerate}
\end{thmgl}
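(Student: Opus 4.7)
We prove both parts by induction on $r$, running at each step the two-part strategy of Theorem~\ref{thm.H^1(G_1,k[g])}: a localisation argument to kill the generic fiber, followed by a torsion-freeness argument over $R=k[\g]^G$. I focus on $H^1(G_r,k[\g])=0$; the other three cases are then obtained by the same modifications that deduced Theorems~\ref{thm.H^1(B_1,k[b])}, \ref{thm.H^1(G_1,k[G])} and \ref{thm.H^1(B_1,k[B])} from Theorem~\ref{thm.H^1(G_1,k[g])}. The base case $r=1$ is Theorem~\ref{thm.H^1(G_1,k[g])}.

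For $r\ge2$, let $K$ be an algebraic closure of $\mathrm{Frac}(R)$. The Universal Coefficient Theorem gives
$$K\otimes_R H^1(G_r,k[\g])\cong H^1\bigl((G_K)_r,K[F]\bigr),$$
where $F=G_K/S$ is the generic (regular semisimple) fiber of the adjoint quotient and $S\subseteq G_K$ is a maximal torus. Since the right-translation $S$-action on $K[G_K]$ commutes with the left $(G_K)_r$-action, $K[F]=K[G_K]^S$ is a direct summand of $K[G_K]$ as a $(G_K)_r$-module; as $K[G_K]$ is injective as a $(G_K)_r$-module, so is $K[F]$, and the localised cohomology vanishes. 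For the $R$-torsion-freeness I would use the Lyndon--Hochschild--Serre spectral sequence attached to the short exact sequence $1\to G_{r-1}\to G_r\to G_1^{(r-1)}\to 1$ obtained by restricting $F^{r-1}\colon G\to G^{(r-1)}$ to $G_r$. Combined with the inductive hypothesis $H^1(G_{r-1},k[\g])=0$, the five-term exact sequence yields
$$H^1(G_r,k[\g])\;\cong\;H^1\bigl(G_1^{(r-1)},k[\g]^{G_{r-1}}\bigr).$$
I then apply Hochschild's description of $H^1$ for a first Frobenius kernel together with Lemma~\ref{lem.direct_summand}(ii) to the map $\varphi\colon f\mapsto(x\mapsto x\cdot f)\colon k[\g]^{G_{r-1}}\to\Hom_k(\g^{(r-1)},k[\g]^{G_{r-1}})$, exactly as in the proof of Theorem~\ref{thm.H^1(G_1,k[g])}. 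The crucial surjectivity hypothesis of that lemma reduces, via the identity $(M^{G_{r-1}})^{G_1^{(r-1)}}=M^{G_r}$, to the surjectivity of $k[\g]^{G_r}\to k[\mc N]^{G_r}$; this follows from Lemma~\ref{lem.liealginvs2} (which identifies $k[\mc N]^{G_r}=k[\mc N]^{p^r}$) combined with the fact that the restriction $k[\g]\to k[\mc N]$ commutes with the $(p^r)$-th power map and $k[\g]^{p^r}\subseteq k[\g]^{G_r}$.

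The main obstacle is that Lemma~\ref{lem.direct_summand}(ii) requires $\Hom_k(\g^{(r-1)},k[\g]^{G_{r-1}})$, and hence $k[\g]^{G_{r-1}}$, to be $R$-projective. For $r=1$ this was Richardson's freeness of $k[\g]$ over $R$; for $r\ge2$ one must propagate it through the passage to $G_{r-1}$-invariants, typically by exhibiting $k[\g]^{G_{r-1}}$ as a direct $R$-summand of $k[\g]$ (and, in the group cases, by working locally at maximal ideals of $R$ as was done in the proof of Theorem~\ref{thm.H^1(G_1,k[G])}). With that in place, the Borel and group analogues are dispatched by the same formal substitutions ($B$, $\b$, $\u$; and the fibers of the adjoint quotient on $G$, $B$) used in Sections~\ref{s.H^1(G_1,k[g])} and \ref{s.H^1(G_1,k[G])}.
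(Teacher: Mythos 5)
Your strategy (induction on $r$ via the extension $1\to G_{r-1}\to G_r\to G_1^{(r-1)}\to 1$, the five-term exact sequence, and then Hochschild's description of $H^1$ of a \emph{first} Frobenius kernel applied to $k[\g]^{G_{r-1}}$) is genuinely different from the paper's, and it has a real gap which you yourself flag but do not close. To run Lemma~\ref{lem.direct_summand}(ii) on $\varphi\colon k[\g]^{G_{r-1}}\to\Hom_k(\g^{(r-1)},k[\g]^{G_{r-1}})$ you need two nontrivial facts about the $R$-module $M:=k[\g]^{G_{r-1}}$: that $\Hom_k(\g^{(r-1)},M)$ is $R$-projective (so that $M$ is), and that $M/R^+M$ is identified with $k[\mc N]^{G_{r-1}}$ (equivalently, that the vanishing ideal of $\mc N$ meets $M$ exactly in $R^+M$) -- without the latter, the kernel of $\ov\varphi$ need not be $k[\mc N]^{G_r}$ and your reduction of the surjectivity hypothesis breaks down. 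Both would follow from $M$ being a direct $R$-summand of $k[\g]$, but that is precisely the statement you leave unproved; it is not a formal consequence of Richardson's freeness of $k[\g]$ over $R$, and establishing it would require a separate analysis of the ring of $G_{r-1}$-invariants (note that Lemma~\ref{lem.liealginvs2} does \emph{not} apply to $X=\g$ itself, since there $\max_x\codim_\g\g_x=\dim\g-n<\dim\g$; it only describes the invariants on $\mc N$ and on the fibers). So as written the induction does not go through.

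The paper avoids this entirely by not descending to a first Frobenius kernel at all: it keeps the module $M=k[\g]$ (which \emph{is} known to be $R$-free) and replaces Hochschild's description of restricted derivations by the first map of the Hochschild complex of the $H_r$-module $M$ itself,
$$\varphi=f\mapsto\bigl(\Delta_M(f)-1\ot f\bigr):M\to k[H_r]\ot M,$$
whose target is again $R$-free, whose kernel is $M^{H_r}$, and whose reduction mod $R^+$ is the corresponding map for $k[\mc N]$ (resp.\ $k[\u]$, resp.\ the fiber of the adjoint quotient). Then Lemma~\ref{lem.liealginvs2} gives the surjectivity $M^{H_r}\to(M/R^+M)^{H_r}$ on the nose, Lemma~\ref{lem.direct_summand}(ii) gives torsion-freeness of $H^1(H_r,M)\subseteq(k[H_r]\ot M)/\Im(\varphi)$ over $R$, and the generic-fiber vanishing (which you do argue correctly) finishes the proof -- no induction on $r$, no spectral sequence, and no structural input on $k[\g]^{G_{r-1}}$. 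If you want to salvage your route, the missing ingredient is a proof that $k[\g]^{G_{r-1}}$ is a graded-free $R$-module complemented in $k[\g]$; with the paper's formulation that ingredient is simply not needed.
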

\begin{proof}
(i) Let $(H,M)$ be the group and module in question, i.e. $(G,k[\g])$ or $(B,k[\b])$.
Put $R=k[\h]^H$. Let $\varphi$ be the first map in the Hochschild complex of the $H_r$-module $M$, see \cite[I.4.14]{Jan}:
$$\varphi=f\mapsto(\Delta_M(f)-1\ot f):M\to k[H_r]\ot M.$$
Then the induced map $\ov\varphi:M/R^+M\to k[H_r]\ot(M/R^+M)$ is the first map in the Hochschild complex of the $H_r$-module $M/R^+M$ which is $k[\mc N]$ or $k[\u]$.
So $\Ker(\varphi)=M^{H_r}$ and $\Ker(\ov\varphi)=(M/R^+M)^{H_r}$.
Now the proof is the same as that of the corresponding result in Section~\ref{s.H^1(G_1,k[g])}, except that we work with the above map $\varphi$
and instead of Lemma~\ref{lem.liealginvs} we apply Lemma~\ref{lem.liealginvs2}.\\
(ii).\ Let $(H,M)$ be the group and module in question, i.e. $(G,k[G])$ or $(B,k[B])$.
As in the proof of the corresponding result in Section~\ref{s.H^1(G_1,k[G])} we reduce to the case that $G$ is simple of type (1) or (2).
Put $R=k[H]^H$. Fix a maximal ideal $\m$ of $R$. Let $\varphi$ be the first map in the Hochschild complex of the $H_r$-module $M_\m$.
Then the induced map $\ov\varphi:M_\m/\m_\m M_\m\to k[H_r]\ot M_\m/\m_\m M_\m$ is the first map in the Hochschild complex of
the $H_r$-module $M_\m/\m_\m M_\m=M/\m M$ which is the coordinate ring of the fiber of $H\to H/\nts/H$ over the point $\m$.
So $\Ker(\varphi)=(M_\m)^{H_r}$ and $\Ker(\ov\varphi)=(M/\m M)^{H_r}$.
Now the proof is the same as that of the corresponding result in Section~\ref{s.H^1(G_1,k[G])}, except that we work with the above map $\varphi$
and instead of Lemma~\ref{lem.liealginvs} we apply Lemma~\ref{lem.liealginvs2}.
\end{proof}

\begin{remgl}
For $G$ classical with natural module $V=k^n$ we consider the cohomology groups $H^1(G_r,S^iV)$ and $H^1(G_r,S^i(V^*))$.

Results about these modules can mostly easily be deduced from results on induced modules in the literature.
For induced modules one can reduce to $B_r$-cohomology using the following result of Andersen-Jantzen for general $G$.
Let $B$ be a Borel subgroup of $G$ with unipotent radical $U$ and let $T$ be a maximal torus of $B$.
For $\lambda\in X(T)$, the character group of $T$, we denote by $\nabla(\lambda)$, the $G$-module induced from the 1-dimensional $B$-module given by $\lambda$.
We call the roots of $T$ in the opposite Borel subgroup $B^+$ positive.
By \cite[II.12.2]{Jan} we have for $\lambda$ dominant
\vspace{-2mm}
\begin{equation}\label{eq.ind}
H^1(G_r,\nabla(\lambda))^{[-r]}\cong\ind_B^G(H^1(B_r,\lambda)^{[-r]}).
\end{equation}

Below we will always take $\lambda=\varpi_1$ the first non-constant diagonal matrix coordinate.
First take $G=\GL_n$. Let $B$ and $T$ be the lower triangular matrices and the diagonal matrices.
Then the character group $X(T)$ of $T$ identifies with $\mathbb Z^n$. Let $\ve_1$ be the first standard basis element of $X(T)$, i.e.
the character $D\mapsto D_{ii}$. Then $S^iV=\nabla(i\ve_1)$ and $S^i(V^*)=\nabla(-i\ve_n)$.
Replacing $\u^{*[s]}$ by $\lambda\ot\u^{*[s]}$ for $\lambda=i\ve_1$ or $\lambda=-i\ve_n$ in the proof of \cite[Lem.~II.12.1]{Jan}
and using \eqref{eq.ind} we obtain $H^1(G_r,S^iV)=H^1(G_r,S^i(V^*))=0$.

Now take $G=\SL_n$. Then $S^iV=\nabla(i\varpi_1)$ and $S^i(V^*)=\nabla(i\varpi_{n-1})$, 
where $\varpi_j$ denotes the $j$-th fundamental dominant weight. From \cite[Cor.~3.2(a)]{BN} we easily deduce
that $H^1(G_r,S^iV)\ne0$ if and only if $H^1(G_r,S^i(V^*))\ne0$ if and only if
$n=2$ and $p^r\mid i+2p^s$ for some $s\in\{0,\ldots,r-1\}$, or $n=3$, $p=2$ and $2^r\mid i-2^{r-1}$.

For $G=\Sp_n$, $n\ge4$ even, we deduce using $S^i(V)=\nabla(i\varpi_1)$ and \cite[Cor.~3.2(a)]{BN} that $H^1(G_r,S^iV)\ne0$ if and only if $p=2$ and $i$ is odd. 

Now let $G$ be the special orthogonal group $\SO_n$, $n\ge4$, as defined in \cite[Ex.~7.4.7(3),(4),(6),(7)]{Spr} (when $p=2$ this is an abuse of notation). 
Note that $V\cong V^*$ unless $n$ is odd and $p=2$.
Although the simply connected cover $\tilde G\to G$ need not be separable, it still follows from \cite[I.6.8(3), I.6.9(3)]{Jan} that
$H^1(G_r,M)=H^1(\tilde G_r,M)^{T_r}$ for any $G$-module $M$, and $H^1(B_r,M)=H^1(\tilde B_r,M)^{T_r}$ for any $B$-module $M$.
So one has to pick out the weight spaces of the weights in $p^rX(T)\subseteq p^rX(\tilde T)$. 
For $n\ge8$ it follows from \cite[Cor.~3.2(a)]{BN} that $H^1(\tilde G_r,\nabla(i\varpi_1))=0$ for all $i\ge0$. For general $n\ge4$ we proceed as follows.
From \cite[Sect.~2.5-7]{BN} we deduce that all weights of $H^1(B_r,i\varpi_1)$ are of the form $i\varpi_1+p^s\alpha$ for some $s\in\{0,\ldots,r-1\}$ 
and some $\alpha$ simple or ``long" (i.e. there is a shorter root). Since such weights don't occur in $p^rX(T)$ for $\SO_n$, $n\ge4$, we get that
$H^1(B_r,i\varpi_1)=0$, and therefore by \eqref{eq.ind} $H^1(G_r,\nabla(i\varpi_1))=0$ for all $i\ge0$.
By \cite[II.2.17,18]{Jan} $S^i(V^*)$ has a filtration with sections $\nabla(i\varpi_1), \nabla((i-2)\varpi_1),\ldots$.
So $H^1(G_r,S^i(V^*))=0$ for all $i\ge0$.

The fact that the weights of $H^1(B_r,i\varpi_1)$ have the form stated above can been seen more directly as follows.
First one observes that $1$-cocyles in the Hochschild complex of a $U_r$-module $M$ can be seen as the linear maps $D:\Dist^+(U_r)\to M$ with $D(ab)=aD(b)$ for all $a\in\Dist(U_r)$ and $b\in\Dist^+(U_r)$.
Here $\Dist^+(U_r)$ denotes the distributions without constant term, i.e. the distributions $a$ with $a(1)=0$.
Then one shows that, outside type $G_2$, $\Dist(U_r)$ is generated by the $\Dist(U_{-\alpha,r})$ with $\alpha$ simple or long.\footnote{
If $p$ is not special in the sense of \cite{Jan1}, then (also in type $G_2$) $\Dist(U_r)$ is generated by the $\Dist(U_{-\alpha,r})$ with $\alpha$ simple.}
It follows that $H^1(U_r,M)$ is a subquotient of $M\otimes\bigoplus_{\alpha,\, 0\le s<r}\u_{-\alpha}^{*[s]}$, the $\alpha$ simple or long.
Now use that, for $M$ a $B_r$-module, $H^1(B_r,M)=H^1(U_r,M)^{T_r}$.
\end{remgl}

\noindent{\it Acknowledgement}. I would like to thank H.~H.~Andersen and J.~C.~Jantzen for helpful email discussions.

\bigskip

{\sc\noindent School of Mathematics,\\
University of Leeds, LS2 9JT, Leeds, UK.\\
{\tt R.H.Tange@leeds.ac.uk}

\end{document}